\newtheorem{theorem}{Theorem}[section]
\newtheorem{definition}[theorem]{Definition}
\newenvironment{proof}[1][Proof]{\textbf{#1.} }{\ \rule{0.5em}{0.5em}}
\newcommand{\E}{{\rm \bf E}}
\newcommand{\prob}{{\rm \bf P}}
\newcommand{\dN}{{\bf N}}
\newcommand{\dR}{{\bf R}}
\newcommand{\calF}{{\cal F}}
\newcommand{\ep}{\varepsilon}
\newcounter{figurecounter}
\begin{document}

\title{Jointly Controlled Lotteries with Biased Coins%
\thanks{The authors thank Johannes H\"orner, Ehud Lehrer, and Nicolas Vieille for useful discussion.
E. Solan acknowledges the support of the Israel Science Foundation, grant \#217/17.}}

\author{Eilon Solan,%
\thanks{The School of Mathematical Sciences, Tel Aviv
University, Tel Aviv 6997800, Israel. e-mail: eilons@post.tau.ac.il.}
\ Omri N.~Solan,%
\thanks{The School of Mathematical Sciences, Tel Aviv
University, Tel Aviv 6997800, Israel. e-mail: omrisola@post.tau.ac.il.}
\ and Ron Solan%
\thanks{e-mail: ron\_solan@walla.com.}}

\maketitle

\begin{abstract}
We provide a mechanism that uses two biased coins
and implements any distribution on a finite set of elements, in such a way that even if the outcomes of one of the coins is determined by an adversary,
the final distribution remains unchanged.
We apply this result to show that every quitting game in which at least two players have at least two continue actions
has an undiscounted $\ep$-equilibrium, for every $\ep > 0$.
\end{abstract}

\noindent
\textbf{Keywords:} Jointly controlled lotteries, biased coin, quitting games, equilibrium.

\noindent
\textbf{JEL Classification Numbers:} D82, C72.

\section{Introduction}

Random numbers are fundamentals for almost all secure computer systems.
However, random number generators are prone to attacks by adversaries, who may attempt to control their outputs.
To hamper an attacker,
one can use several random number generators, and devise a mechanism that uses the outcomes of all generators to produce one random outcome,
in such a way that if an attacker controls the outcomes of one (or more, but not all) of the random number generators,
the distribution of the mechanism's outcome does not change.
For example, if each random number generator chooses a bit according to the uniform distribution,
and the goal is to produce a random bit whose distribution is uniform,
then a plausible mechanism is to output the xor of all input bits.

In this paper we study this problem, when the distribution according to which each random generator device chooses its output is not the uniform distribution.
More formally, we are given $k$ random generator devices; in every instance device $i$ chooses a letter from a finite alphabet $A_i$ according to the probability distribution $\prob_i$,
independent of past choices.
A mechanism is given by a stopping time $\tau$ and a deterministic rule $f$
that dictates which letter in some finite alphabet $J$ is selected based on the letters that were selected by the random generator devices up to time $\tau$.
Given a probability distribution $\nu$ over $J$,
the goal is to device a mechanism that is immune to attacks by an adversary: even if $k-1$ of the random number generators become faulty
and produce letters according to some law,
the distribution of the outcome of the mechanism is still $\nu$.

We will provide two mechanisms for selecting an element of $J$,
both of which depend on a parameter $\ep > 0$.
One mechanism has a bounded length and selects an element in $J$ with a distribution $\ep$-close to $\nu$:
as long as at least one device is not faulty,
the probability that each element $j \in J$ is selected is $\ep$-close to $\nu(j)$.
The second mechanism may be unbounded yet it is finite a.s.~and selects each element $j \in J$ with probability exactly $\nu(j)$.
Moreover, as long as exactly one device is faulty,
the probability that each element $j \in J$ is selected cannot exceed $\nu(j)$,
and, in case the mechanism never terminates, the identity of the faulty devices is revealed by the information that reached the mainframe.

To demonstrate the usefulness of the result, we apply it to study undiscounted equilibria in stochastic games.
Whether every multiplayer stochastic game admits an undiscounted $\ep$-equilibrium for every $\ep > 0$ is one of the main open problems in game theory to date;
see Flesch, Thuijsman, and Vrieze (1997), Solan (1999), Vieille (2000a,b), Solan and Vieille (2001), Simon (2012), and Solan and Solan (2017) for partial results.
The class of games that we study in this paper is the class of general quitting games.
Those are quitting games in which each player has a single quitting action and may have several continue actions.
This class of games was studied by Solan and Solan (2018),
who showed that those games admit a sunspot $\ep$-equilibrium for every $\ep > 0$;
that is, an $\ep$-equilibrium in an extended game in which at every stage the players observe the outcome of
a uniformly distributed random variable on $[0,1]$, which is independent of past signals and past play.
Using jointly controlled lotteries with biased coins we will show that
if at least two players have at least to continue actions, an undiscounted $\ep$-equilibrium exists.

To date it is not known whether quitting games in which each player has a single quitting action and a single continue action
admit undiscounted $\ep$-equilibria.
Our result shows that when players have enough flexibility in coordinating their play,
an undiscounted $\ep$-equilibrium exists.

There are various ways in which one can strive to extend this equilibrium existence result.
\begin{itemize}
\item
Our method shows that jointly controlled lotteries enable one to transform sunspot $\ep$-equilibria into undiscounted $\ep$-equilibria,
in various settings of stochastic games.
Can one extend the existence result
to other classes of stochastic games that include more than one nonabsorbing state?
\item
One property of the class of general quitting games is that some players have two actions that induce the same transitions,
for every given action profile of the other players.
Is it true that an undiscounted $\ep$-equilibrium exists in any stochastic game in which for every state~$s$,
every player~$i$ and every action~$a_i$ of player~$i$,
there is an action $a'_i \neq a_i$ that yields the same transition as $a_i$ at state~$s$?
\end{itemize}

Biased coins are not prevalent in game theory, since usually it is assumed that players have all randomization means that they need.
One exception is Gossner and Vieille (2002),
who studied two-player zero-sum repeated games in which the randomization device of one of the players is a biased coin that
he can toss once at the beginning of every stage.
They showed that the player can do better than using at every stage the outcome of the toss performed at the beginning of that stage,
and characterized the value of the game as a function of the distribution of the coin.
In their model, the player need not use the information provided by the coin at the stage in which it is obtained,
but may rather use this information in subsequent stages.
In our model, in contrast, aggregating the random information is impossible,
since the letter chosen by a faulty device may depend on past choices of the unfaulty device.

Though jointly controlled lotteries with biased coins reminds one of mediated talk (see, e.g., Lehrer (1996) and Lehrer and Sorin (1997))
and cheap talk (see, e.g., Farrell and Rabin (1996) and Aumann and Hart (2003)),
there are some significant differences among the models.
Indeed, while in mediated talk and cheap talk the players are free to select the messages they send out and the goal is to choose an action for each player,
in our model, when unfaulty, the devices choose messages according to a known stationary probability distribution
and the goal is to choose one outcome.

The paper is arranged as follows.
In Section~\ref{section:lottery} we discuss jointly controlled lotteries with biased coins,
and in Section~\ref{section:game} we apply the mechanism of jointly controlled lotteries to general quitting games.

\section{Jointly Controlled Lotteries with Biased Coins}
\label{section:lottery}

To simplify the presentation we will assume that there are two random number generators;
the extension to any number of random number generators follows the same lines.

Let $A_1$ and $A_2$ be two finite sets, each containing at least two elements.
The set of \emph{finite histories}%
\footnote{By conventions, the set $(A_1 \times A_2)^0$ contains only the empty history.}
is $H := \cup_{t=0}^\infty (A_1 \times A_2)^t$,
and the set of \emph{infinite histories} is $H^\infty := (A_1 \times A_2)^\infty$.
The set $H^\infty$ is a measurable space when equipped with the product $\sigma$-algebra.
For every $t \geq 0$ denote by $\calF^t$ the $\sigma$-algebra over $H^\infty$ defined by all histories of length $t$;
it is the $\sigma$-algebra spanned by the sets $C(h^t) := \{ h = (a^1,a^2,\cdots) \in H^\infty \colon h^t = (a^1,\cdots,a^t)\}$
for $h^t \in (A_1 \times A_2)^t$.

The basic concept that we need is that of a mechanism,
which describes how to generate an element from a set $J$ given an infinite history.
\begin{definition}
A \emph{mechanism} is a triplet $M := (\tau,J,f)$ where
\begin{itemize}
\item
$\tau$ is a stopping time w.r.t.~the filtration $(\calF^t)_{t \geq 0}$.
\item
$J$ is a finite set.
\item
$f : H^\infty \to J$ is a function that is measurable w.r.t.~the $\sigma$-algebra $\calF^\tau$.
\end{itemize}
\end{definition}

When $M = (\tau,J,f)$ is a mechanism, every probability distribution $\mu$ over $H^\infty$ defines a probability distribution $\mu_M$ over $J$ by
\[ \mu_M(j) = \mu(\{ h \in H^\infty \colon f(h) = j\}). \]

Let $i \in \{1,2\}$.
A \emph{(behavior) strategy} for the $i$'s coordinate is a function $\sigma_i : H \to \Delta(A_i)$
that assigns a distribution over $A_i$ to each finite history.
The set of all strategies for the $i$'th coordinate is denoted $\Sigma_i$.
A strategy is \emph{stationary} if $\sigma_i(h^t)$ is independent of $h^t \in H$.
Every pair of strategies $(\sigma_1,\sigma_2)$ defines a probability measure $\prob_{\sigma_1,\sigma_2}$ over $H^\infty$,
and in particular,
together with a mechanism $M$ it defines a probability measure $\prob_{\sigma_1,\sigma_2,M}$ over $J$.

We now present three properties of mechanisms: having finite length,
being able to implement a given probability distribution,
and being able to implement the distribution in a secure way.

\begin{definition}
Let $T \in \dN$.
A mechanism $M = (\tau,J,f)$ \emph{has length at most $T$} if $\prob_{\sigma_1,\sigma_2}(\tau \leq T) = 1$
for every pair of strategies $(\sigma_1,\sigma_2)$.
\end{definition}

\begin{definition}
Let $\ep \geq 0$ and let $\nu$ be a probability distribution over $J$.
The mechanism $M = (\tau,J,f)$ and the pair of strategies $(\sigma_1,\sigma_2) \in \Sigma_1\times\Sigma_2$ \emph{$\ep$-implement the distribution $\nu$} if
$\| \prob_{\sigma_1,\sigma_2,M} - \nu\|_\infty \leq \ep$.
The mechanism $M = (\tau,J,f)$ and the pair of strategies $(\sigma_1,\sigma_2) \in \Sigma_1\times\Sigma_2$
\emph{$\ep$-implement the distribution $\nu$ in a strong secure fashion}
if for every $i \in \{1,2\}$ and every strategy $\sigma'_i \in \Sigma_i$,
the mechanism $M$ and the pair of strategies $(\sigma'_i,\sigma_{3-i})$ $\ep$-implement the distribution $\nu$.
\end{definition}

Our first result concerns the possibility of $\ep$-implementing any distribution in a secure fashion given any pair of stationary strategies.
\begin{theorem}
\label{theorem:strong}
Let $A_1$, $A_2$, and $J$ be three finite sets, each of which contains at least two elements.
Let $\sigma_1$ (resp.~$\sigma_2$) be a stationary strategy that selects all elements in $A_1$ (resp.~$A_2$) with positive probability,
and let $\nu$ be any distribution on $\nu$.
For every $\ep > 0$ there is a mechanism $M = (\tau,J,f)$ that has a finite length and, together with the pair of stationary strategies $(\sigma_1,\sigma_2)$,
$\ep$-implements $\nu$ in a strong secure fashion.
\end{theorem}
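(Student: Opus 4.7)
The plan is to reduce the theorem to $\ep$-implementing the uniform distribution on $\dZ/N$ for a suitable $N$, and then to build such a mechanism by a blocked construction on the two biased coins.

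\emph{Reduction.} I would first choose $N$ large enough that $\nu$ is $(\ep/2)$-approximated in $\|\cdot\|_\infty$ by a distribution $\nu'$ on $J$ with $\nu'(j) = n_j/N$ for integers $n_j$ summing to $N$. Any $(\ep/2)$-uniform mechanism on $\dZ/N$, composed with a deterministic surjection $\dZ/N \to J$ whose fibres have sizes $n_j$, then $\ep$-implements $\nu$. This isolates the core problem: produce $\ep$-uniform on $\dZ/N$ in a strong secure fashion from the pair $(\sigma_1, \sigma_2)$.

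\emph{Construction.} Pick $M$ large enough that $(\max_a \sigma_i(a))^M \le \ep/(8N)$ for $i=1,2$, so that the product measures $\sigma_i^{\otimes M}$ on $A_i^M$ have atoms of mass at most $\ep/(8N)$. A greedy bin-packing argument then partitions each $A_i^M$ into $N$ classes whose $\sigma_i^{\otimes M}$-measures lie in $[1/N - \ep/(8N),\, 1/N + \ep/(8N)]$; let $\phi_i : A_i^M \to \dZ/N$ denote the class map. Take $\tau = M$ and define
\begin{equation*}
 f(h^M) \; := \; \phi_1(a_1^1,\dots,a_1^M) \; + \; \phi_2(a_2^1,\dots,a_2^M) \pmod N,
\end{equation*}
followed by the surjection to $J$. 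Under the honest pair, $\bar a_1 := (a_1^1,\dots,a_1^M)$ and $\bar a_2 := (a_2^1,\dots,a_2^M)$ are independent, and each $\phi_i(\bar a_i)$ is $(\ep/8)$-uniform on $\dZ/N$, so $f$ is $(\ep/2)$-uniform. Under a unilateral Player $i$ deviation, the honest Player $3-i$ still plays $\sigma_{3-i}$ i.i.d., so $\bar a_{3-i}$ has marginal law $\sigma_{3-i}^{\otimes M}$, and $\phi_{3-i}(\bar a_{3-i})$ is marginally $(\ep/8)$-uniform.

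\emph{Main obstacle.} The sticking point is conditional uniformity. At stage $M$ the deviator has already observed $a_{3-i}^1,\dots,a_{3-i}^{M-1}$ and then chooses $a_i^M$ adaptively; the only ``fresh'' randomness left in $\phi_{3-i}(\bar a_{3-i})$ is the single draw $a_{3-i}^M \sim \sigma_{3-i}$, and $\sigma_{3-i}$ can have maximum atom $p_{3-i}^* := \max_a \sigma_{3-i}(a)$ close to $1$. In that case the deviator can, with probability $\asymp p_{3-i}^*$, cancel the honest contribution, and the one-block bound deteriorates to roughly $p_{3-i}^*$ rather than $\ep$.

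\emph{Fix.} To beat this one-step lookahead, I would replace the single block by a concatenation of $K$ mini-blocks of length $M'$, outputting
\begin{equation*}
 f \; := \; \sum_{k=1}^K \bigl[\phi_1(\bar a_1^{(k)}) + \phi_2(\bar a_2^{(k)})\bigr] \pmod N,
\end{equation*}
and perform a Fourier/convolution analysis on $\dZ/N$: for each non-trivial character $\chi$, compute $\hat f(\chi) = E[\prod_k \chi(y_k)]$ by iterated conditional expectation, factoring out the independent honest randomness across mini-blocks. Choosing the $\phi_i$'s so that, for any fixed prefix, the conditional distribution of each mini-block's contribution has all non-trivial Fourier coefficients bounded by some $\rho < 1$, and choosing $M'$ large enough to drive $\rho$ down, the product over $K$ mini-blocks gives $|\hat f(\chi)| \le \rho^K$; together with Plancherel this yields $\|f - U\|_\infty \le \ep/2$. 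The central technical step---and the point where I expect the real work to be---is to design $\phi_1,\phi_2$ (possibly with some form of symmetrization between the two players' roles across mini-blocks) so that this Fourier factorization genuinely compounds under an adaptive adversary rather than collapsing to a single factor.
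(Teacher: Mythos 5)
Your reduction to the uniform distribution on $\dZ/N$ is fine, your single-block construction is fine as a warm-up, and you correctly diagnose why it fails: the deviator chooses its stage-$t$ letter knowing the honest device's letters through stage $t-1$, so at the end of a block only one honest letter is still unpredictable. The problem is that your fix is exactly where the proof has to live, and it has a genuine gap. The bound $|\hat f(\chi)|\le\rho^K$ does not follow from iterated conditional expectation against an adaptive adversary. Conditioning on everything up to the last fresh honest letter of mini-block $K$ extracts one factor $\bigl|\E[\chi(y_K)\mid\calG_K]\bigr|\le\rho$, but the leftover term $\prod_{k<K}\chi(y_k+z_k)\cdot\chi(z_K)$ has modulus one and is entangled, through the adversary's adaptive choices $z_1,\dots,z_K$, with the honest randomness of all earlier blocks; you cannot peel off a second factor. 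So the argument yields only a single factor $\rho$, and with a binary per-stage alphabet $\rho\ge|\sigma_{3-i}(\alpha)-\sigma_{3-i}(\beta)|$, which can be arbitrarily close to $1$ for a heavily biased coin. Worse, the sum-mod-$N$ aggregator is genuinely attackable rather than merely hard to analyze: an adversary can structure its own word to retain enough freedom to approximately cancel the partial sum it has already observed, so that the output is essentially a function of the honest device's last few letters and carries an atom of mass bounded away from $\nu(j)$ independently of $K$ and $M'$. Preventing this is precisely the ``central technical step'' you defer, and it is not clear it can be done within the group-sum framework.

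The paper sidesteps all of this by choosing an aggregator with a martingale property instead of a group-sum property. After reducing to $|A_1|=|A_2|=2$, it defines an increment $Y^t$ taking the values $-\sigma_1(\beta)\sigma_2(\beta)$, $\sigma_1(\beta)\sigma_2(\alpha)$, $\sigma_1(\alpha)\sigma_2(\beta)$, $-\sigma_1(\alpha)\sigma_2(\alpha)$ according to the pair of letters drawn at stage $t$; these weights are rigged so that $\E[Y^t\mid\calF^{t-1}]=0$ whenever at least one device plays its honest stationary strategy, no matter how the other device (adaptively, history-dependently) chooses its letter. Thus the adversary cannot bias the conditional mean of any increment --- there is nothing for it to cancel. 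Stopping at $\tau_C=\min\{t:\sum_{k\le t}(Y^k)^2\ge C\}$, which is uniformly bounded by $C/c_0^2$, and normalizing $Z_C=\sum_{t\le\tau_C}Y^t/\sqrt C$, the martingale central limit theorem gives convergence of $Z_C$ to the standard normal uniformly over unilateral deviations; partitioning $\dR$ into intervals of normal mass $\nu(j)$ then finishes the proof. That zero-conditional-mean design is the idea your construction is missing.
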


\begin{proof}
Assume w.l.o.g.%
\footnote{If the set $A_i$ contains more than two elements, divide it arbitrarily into two subsets,
and treat all elements that lie in the same subset as equivalent.}
that $|A_1| = |A_2| = 2$,
and denote $A_i = \{\alpha,\beta\}$ for $i \in \{1,2\}$.
For each $i\in\{1,2\}$, the strategy $\sigma_i$ is stationary;
denote by $\sigma_i(\alpha)$ and $\sigma_i(\beta)$ the per-stage probability that strategy $\sigma_i$ selects the elements $\alpha$ and $\beta$, respectively.

For every $t \in \dN$ define a random variable $Y^t$ over $H^\infty$ as follows:
\[ Y^t := \left\{
\begin{array}{lll}
-\sigma_1(\beta) \sigma_2(\beta) & \ \ \ \ \ & a^t = (\alpha,\alpha),\\
\sigma_1(\beta) \sigma_2(\alpha) & \ \ \ \ \ & a^t = (\alpha,\beta),\\
\sigma_1(\alpha) \sigma_2(\beta) & \ \ \ \ \ & a^t = (\beta,\alpha),\\
-\sigma_1(\alpha) \sigma_2(\alpha) & \ \ \ \ \ & a^t = (\beta,\beta),\\
\end{array}
\right. \]
We observe that $\E_{\sigma_1,\sigma'_2}[Y^t] = \E_{\sigma'_1,\sigma_2}[Y^t] = 0$,
for every $t \geq 0$ and every pair of strategies $(\sigma'_1,\sigma'_2) \in \Sigma_1\times\Sigma_2$.

For every real number $C > 0$ let $\tau_C$ be the stopping time
\begin{equation}
\label{def:tau}
\tau_C := \min\left\{ t \in \dN \colon \sum_{k=1}^t (Y^k)^2 \geq C\right\}.
\end{equation}
Denoting by $c_0 := \min\{ \sigma_1(\alpha), \sigma_1(\beta), \sigma_2(\alpha), \sigma_2(\beta)\} > 0$,
we obtain that the stopping time $\tau_C$ is bounded by $\tfrac{C}{c_0^2}$.
Denote $Z_C := \frac{\sum_{t=1}^{\tau_C}Y^t}{\sqrt{C}}$.
The Martingale Central Limit Theorem (see, e.g., McLeish, 1974),
implies that for each player~$i$ and each strategy $\sigma'_i\in \Sigma_i$ of player~$i$,
under the pair of strategies $(\sigma'_i,\sigma_{3-i})$ the distribution of $Z_C$ converges to the standard normal distribution as $C$ goes to infinity.
Moreover, the rate of convergence is independent of $\sigma'_i$.

It follows that to $\ep$-implement $\nu$ in a strong secure fashion,
we need to divide the real line $\dR$ into $J$ disjoint intervals $I_1,I_2,\cdots,I_J$,
such that the probability of the interval $I_j$ under the standard normal distribution is $\nu(j)$, for each $j \in J$.
We then choose $C$ sufficiently large, and define the mechanism $M$ by $(\tau_C,J,f)$,
where for every infinite history $h$ we define $f(h)$ to be the unique $j \in J$ such that $Z_C(h) \in I_j$.
\end{proof}

\bigskip

We now weaken the security requirement of the mechanism.
The weaker condition does not require that the mechanism stops in finite time whatever the players play,
but rather that it stops in finite time when the two random generator devices are not faulty,
and that if one of the devices is faulty,
then its outputs will necessarily reveal that it is faulty.

\begin{definition}
\label{def:weak}
Let $\ep \geq 0$ and let $\nu$ be a probability distribution over $J$.
The mechanism $M = (\tau,J,f)$ and the pair of strategies $(\sigma_1,\sigma_2) \in \Sigma_1\times\Sigma_2$
\emph{$\ep$-implement the distribution $\nu$ in a weak secure fashion}
if the following conditions hold:
\begin{itemize}
\item[(W.1)]   $M$ and $(\sigma_1,\sigma_2)$ $\ep$-implement the distribution $\nu$.
\item[(W.2)]   For every strategy $\sigma'_1 \in \Sigma_1$ we have $\prob_{\sigma'_1,\sigma_2,M}(j) \leq \nu(j)$.
\item[(W.3)]   For every strategy $\sigma'_2 \in \Sigma_2$ we have $\prob_{\sigma_1,\sigma'_2,M}(j) \leq \nu(j)$.
\item[(W.4)]   There are two disjoint events $D_1$ and $D_2$ such that
\begin{itemize}
\item
$D_1 \cup D_2 \subseteq \{\tau = \infty\}$.
\item
$\prob_{\sigma_1,\sigma_2}(D_1) = \prob_{\sigma_1,\sigma_2}(D_2) = 0$.
\item
For every strategy $\sigma'_1 \in \Sigma_1$ we have $\prob_{\sigma'_1,\sigma_2}(D_1) + \prob_{\sigma'_1,\sigma_2}(\{\tau < \infty\}) = 1$.
\item
For every strategy $\sigma'_2 \in \Sigma_2$ we have $\prob_{\sigma_1,\sigma'_2}(D_2) + \prob_{\sigma_1,\sigma'_2}(\{\tau < \infty\}) = 1$.
\end{itemize}
\end{itemize}
\end{definition}
The event $D_1$ and $D_2$ in Definition~\ref{def:weak} are used to reveal the identity of the faulty device:
on the event $D_i$ it is known that device~$i$ is faulty, for $i=1,2$;
indeed, this set occurs with probability 0 if no device is faulty,
and it occurs whenever the mechanism does not stop and device~$i$ is faulty.
Note that whereas strong security requires the stopping time $\tau$ to be uniformly bounded,
weak security has no such restriction.

\begin{theorem}
\label{theorem:weak}
Let $A_1$, $A_2$, and $J$ be three finite sets, each of which contains at least two elements.
Let $\sigma_1$ (resp.~$\sigma_2$) be a stationary strategy that selects all elements in $A_1$ (resp.~$A_2$) with positive probability,
and let $\nu$ be any distribution on $\nu$.
There is a mechanism $M = (\tau,J,f)$ that $0$-implements $\nu$ in a weak secure fashion.
\end{theorem}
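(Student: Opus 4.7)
My plan is to construct the mechanism as an infinite iteration of finite-length ``tries'', each built from Theorem~\ref{theorem:strong}. Fix a sequence $\alpha_n>0$ with $\sum_{n=1}^{\infty}\alpha_n=1$ (e.g., $\alpha_n=2^{-n}$) and tolerances $\ep_n$ decreasing fast (e.g., $\ep_n=\alpha_n\nu_{\ast}/4$ with $\nu_{\ast}=\min_{j\in J}\nu(j)$, restricting to the support of $\nu$ if necessary). In the $n$-th try, invoke Theorem~\ref{theorem:strong} to obtain a finite-length strong-secure mechanism $M_n$ that $\ep_n$-implements an auxiliary distribution $\mu_n$ on $J\cup\{\perp\}$. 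If $M_n$ outputs some $j\in J$ the overall mechanism terminates with output $j$; otherwise (output $\perp$) the $(n{+}1)$-st try is initiated on a fresh block of rounds.

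The distributions $\mu_n$, together with a further acceptance-rejection layer realized by a second invocation of Theorem~\ref{theorem:strong} applied to an appropriate binary distribution, are chosen so that in each try the probability of committing to output $j$ equals exactly $\alpha_n\nu(j)$ under honest play and is at most $\alpha_n\nu(j)$ under any unilateral deviation. The adaptive renormalization is possible because the realized probabilities under the fixed $(\sigma_1,\sigma_2)$ are known quantities; knowing them, one picks the acceptance coefficients so that the effective honest commitment in round $n$ is exactly $\alpha_n\nu(j)$, while the strong $\ep_n$-security of Theorem~\ref{theorem:strong} (coupled with the slack built into $\mu_n$) guarantees that under a deviation the corresponding trimmed probabilities can only decrease. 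Summing over $n$ yields $\prob_{\sigma_1,\sigma_2}(f=j)=\sum_n\alpha_n\nu(j)=\nu(j)$, which gives (W.1), and $\prob_{\sigma'_i,\sigma_{3-i}}(f=j)\le\nu(j)$, which gives (W.2) and (W.3).

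For (W.4), define $D_i$ as a measurable subset of $\{\tau=\infty\}$ that captures ``player $i$ deviates and thereby prevents termination''. A natural candidate is the event that the empirical action frequencies of player $i$ across the rounds used by the $M_n$'s fail to converge to $\sigma_i$, while those of player $3-i$ do converge to $\sigma_{3-i}$. Under honest play this has probability zero by the strong law of large numbers; under a unilateral deviation by player $i$, the honest player's statistics converge almost surely, so if the mechanism fails to terminate it must be because player $i$'s statistics are abnormal, and the sample path then belongs to $D_i$. Disjointness of $D_1$ and $D_2$, as well as $\prob_{\sigma'_i,\sigma_{3-i}}(D_i)+\prob_{\sigma'_i,\sigma_{3-i}}(\tau<\infty)=1$, follow.

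The main technical obstacle is the exact-implementation requirement: Theorem~\ref{theorem:strong} provides only $\ep$-accuracy, and naive iteration accumulates error. The adaptive acceptance-rejection step recovers exactness by renormalizing each try against the known honest-play probabilities, and the key delicate verification is that this renormalization, while using information from the realized $(\sigma_1,\sigma_2)$-probabilities, nonetheless preserves the pointwise upper bound by $\nu$ under every unilateral deviation---this is where the strong security of Theorem~\ref{theorem:strong} (bounding the deviator's influence uniformly) is used most substantively, and it is the heart of the proof.
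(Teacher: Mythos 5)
Your proposal takes a genuinely different route from the paper, but it has a gap that I do not think can be closed along the lines you describe. The paper does not iterate the finite-length mechanism of Theorem~\ref{theorem:strong} at all; instead it constructs a $\Delta(J)$-valued process $(Y^t)$ with $Y^0=\nu$ whose one-step update is a martingale \emph{separately} with respect to each device's randomization (conditions (C.3)--(C.4)), together with a support-shrinking condition (C.5). Exactness of (W.1)--(W.3) then falls out of the martingale property: under any unilateral deviation the process is still an exact martingale, so the probability of outputting $j$ is at most $\E[Y^\infty(j)]=\nu(j)$, with equality under honest play because the support a.s.\ shrinks to a singleton. The events $D_i$ in (W.4) are defined by which device plays the support-shrinking action infinitely often.

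The gap in your approach is the exact inequality in (W.2)/(W.3). Theorem~\ref{theorem:strong} only guarantees that a deviator's output distribution is within $\ep_n$ of the target in sup-norm; it does \emph{not} guarantee that a deviation can only decrease the probability of any given outcome, and in fact the deviator can condition on the running statistic $Z_C$ to push the probability of a chosen $j$ strictly above its honest value by some $\delta_n>0$ while leaving the probability of $\perp$ unchanged. Concretely: if the deviator plays honestly in all tries except try $1$, and in try $1$ shifts mass from some $j'\neq j$ to $j$, then the probability of reaching try $2$ is unchanged, the contributions of tries $n\geq 2$ are unchanged, and the total probability of $j$ becomes $\nu(j)+\delta_1 a_1(j)>\nu(j)$, violating (W.2). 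Your ``slack'' idea (targeting $\mu_n(j)$ below the budget so that $\mu_n(j)+\ep_n$ meets it) cannot repair this: whatever slack you leave in try $n$ must be exactly recouped by honest play in later tries to keep the honest total equal to $\nu(j)$, and the deviator then simply plays honestly until a try where slack is being recouped and deviates there. A further, smaller problem is that the honest output distribution of the finite-length CLT mechanism has finitely many atoms, so your acceptance--rejection layer cannot realize arbitrary acceptance probabilities exactly, which also threatens (W.1). The structural feature you are missing is precisely what the paper's per-step martingale condition provides: an update rule whose conditional expectation is invariant under \emph{any} behavior of a single device, which is what converts approximate security into the exact bound $\prob_{\sigma'_i,\sigma_{3-i},M}(j)\leq\nu(j)$.
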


\begin{proof}
Assume w.l.o.g.~that $|A_1| = |A_2| = 2$,
and denote $A_i = \{\alpha,\beta\}$ for $i=1,2$.
Let $(Y^t)_{t \in \dN}$ be a stochastic process with values in $\Delta(J)$, adapted to the filtration $(\calF^t)_{t \geq 0}$, which satisfies the following properties:
\begin{itemize}
\item[(C.1)]   $Y^0 = \nu$.
\item[(C.2)]   $Y^{t+1}$ depends deterministically on $Y^t$, $a_1^{t+1}$, and $a_2^{t+1}$, and not on $Y^0,\cdots,Y^{t-1}$.
\item[(C.3)]   $\E_{\sigma_1,\alpha}[Y^{t+1} \mid Y^t] = \E_{\sigma_1,\beta}[Y^{t+1} \mid Y^t] = Y^t$.
\item[(C.4)]   $\E_{\alpha,\sigma_2}[Y^{t+1} \mid Y^t] = \E_{\beta,\sigma_2}[Y^{t+1} \mid Y^t] = Y^t$.
\item[(C.5)]   If the support of $Y^t$ contains more than one element,
then for every possible value $\lambda$ of the random variable $Y^t$ that is attained with positive probability,
at least one of the distributions $(Y^{t+1} \mid Y^t,\alpha,\alpha)$,
$(Y^{t+1} \mid Y^t,\alpha,\beta)$, $(Y^{t+1} \mid Y^t,\beta,\alpha)$, and $(Y^{t+1} \mid Y^t,\beta,\beta)$ has a support that contains less elements than the support of $\lambda$.
\end{itemize}
To show that such a process exists, let $\lambda$ be a possible value of $Y^t$.
Denote by $d_{a_1,a_2}$ the distribution $(Y^{t+1} \mid Y^t=\lambda,a_1,a_2)$, for each $a_1,a_2 \in \{\alpha,\beta\}$.
Conditions~(C.3) and~(C.4) determine three equalities that the four variables $(d_{a_1,a_2})_{a_1,a_2 \in \{\alpha,\beta\}}$ should satisfy.
One solution of these equalities is $d_{a_1,a_2} = \lambda$ for every $a_1,a_2 \in \{\alpha,\beta\}$.
Since the number of conditions is smaller by one than the number of variables,
the set of solutions is a line,
hence there is a solution on the boundary of the set $(\Delta(J))^4$,
and therefore indeed such a stochastic process $(Y^t)_{t \geq 0}$ exists.

Conditions~(C.3) and~(C.4) imply that the process $(Y^t)_{t \in \dN}$ is a martingale under $(\sigma_1,\sigma_2)$,
hence it converges $\prob_{\sigma_1,\sigma_2}$-a.s. to a random variable $Y^\infty$.
Denote
\[ c_1 := \min\{\sigma_1(\alpha)\sigma_2(\alpha), \sigma_1(\alpha)\sigma_2(\beta), \sigma_1(\beta)\sigma_2(\alpha), \sigma_1(\beta)\sigma_2(\beta)\} > 0. \]
Under the stationary strategy pair $(\sigma_1,\sigma_2)$, for every $t \in \dN$,
the probability that the support of $Y^{t+1}$ is strictly contained in the support of $Y^t$ is at least $c_1$.
It follows that $Y^\infty$ is a Dirac measure $\prob_{\sigma_1,\sigma_2}$-a.s.
Since the process $(Y^t)_{t \in \dN}$ is a martingale, it follows that for every $j\in J$ we have $\prob_{\sigma_1,\sigma_2}(Y^\infty = j) = Y^0(j) = \nu(j)$.
Setting $\tau = \infty$ and $M = (\tau,J,Y^\infty)$ we obtain that $M$ 0-implements the distribution $\nu$,
and Condition~(W.1) holds.

Condition~(C.3) implies that the process $(Y^t)_{t \geq 0}$ is a martingale under $(\sigma_1,\sigma'_2)$ for every strategy $\sigma'_2 \in \Sigma_2$,
which implies that Condition~(W.3) holds.
Analogously, Condition~(W.2) holds as well.

We complete the proof by proving that Condition~(W.4) holds.
Denote by $(\widehat a^t_1,\widehat a^t_2) \in A_1 \times A_2$ an action pair such that the support of $(Y^{t+1} \mid Y^t,\widehat a^t_1,\widehat a^t_2)$
is strictly contained in the support of $Y^t$.
We note that under strategy $\sigma_i$ we have
\[ \prob_{\sigma_i,\sigma'_{3-i}}(a^t_i = \widehat a^t_i \hbox{ infinitely often}) = 1, \ \ \ \forall \sigma'_{3-i}\in \Sigma_{3-i}. \]
For $i\in\{1,2\}$ define an event $D_i$ by
\[ D_i := \{ a^t_i = \widehat a^t_i \hbox{ finitely many times}, a^t_{3-i} = \widehat a^t_{3-i} \hbox{ infinitely often}\}. \]
The event $D_i$ contains all histories in which device~$3-i$ chooses the action that leads to a decrease in the support of $Y^t$ infinitely many times,
while device~$i$ does not do so.
The reader can verify that Condition~(W.4) in Definition~\ref{def:weak} holds,
and therefore the mechanism $M$ 0-implements the distribution $\nu$ in a weak secure fashion.
\end{proof}

%

\section{Undiscounted $\ep$-Equilibrium in General Quitting Games}
\label{section:game}

In this section we provide an application of jointly controlled lotteries with biased coins to the area of stochastic games.
As mentioned in the introduction, whether every stochastic game admits an undiscounted equilibrium payoff is one of the most challenging
open problems in game theory to date.
We will use the tools developed in Section~\ref{section:lottery} to prove the existence of
an undiscounted $\ep$-equilibrium in a class of stochastic games that was termed
\emph{general quitting game} in Solan and Solan (2018).
A \emph{general quitting game} is a vector $\Gamma = (I,(A_i^c)_{i \in I},u)$ where
\begin{itemize}
\item   $I$ is a finite set of players.
\item   $A_i^c$ is a finite nonempty set of \emph{continue actions},
for each player $i \in I$.
The set of all actions of player~$i$ is $A_i := A_i^c \cup \{Q_i\}$, where $Q_i$ is interpreted as a \emph{quitting action}.
The set of all action profiles is $A = \times_{i \in I} A_i$.
\item   $u : A \to [0,1]^I$ is a payoff function.
\end{itemize}
The game proceeds as follows.
At every stage $t \in \dN$, each player $i \in I$ chooses an action $a_i^t \in A_i$.
Let $a^t = (a_i^t)_{i \in I}$ be the action profile chosen at stage $t$.
Denote by $t_*$ the first stage in which some player selects his quitting action;
that is, the first stage $t$ such that $a^t_i = Q_i$ for some player $i \in I$.
The stage payoff at stage $t$ is given by $u(a^{\min\{t,t_*\}})$.

A \emph{(behavior) strategy} of player~$i$ is a function $\sigma_i \colon \left(\cup_{t=0}^\infty A^t\right) \to \Delta(A_i)$.
A \emph{strategy profile} is a vector of strategies $\sigma = (\sigma_i)_{i \in I}$, one for each player.
Every strategy profile $\sigma$ induces a probability distribution over the set of plays $A^\infty$.
Denote by $\E_\sigma$ the corresponding expectation operator
and by
\[ \gamma(\sigma) := \E_\sigma\left[\lim_{T \to \infty} \frac{1}{T}\sum_{t=1}^T u(a^{\min\{t,t_*\}})\right] \]
the expected (undiscounted) \emph{payoff} under strategy profile $\sigma$.
Note that the way a strategy is defined after the termination stage $t_*$ does not affect the payoff.

A mixed action profile $x \in \times_{i \in I} \Delta(A_i)$ is \emph{nonabsorbing} if under $x$ all players play continue actions
with probability 1, and it is \emph{absorbing} otherwise.

Let $\ep \geq 0$.
A strategy profile $\sigma = (\sigma_i)_{i \in I}$ is an \emph{$\ep$-equilibrium}%
\footnote{The concept that we define is that of undiscounted $\ep$-equilibrium.
Theorem~\ref{theorem:game} below holds also for the stronger notion of uniform $\ep$-equilibrium as well.}
if for every player $i \in I$ and every strategy $\sigma'_i$ of player~$i$,
\[ \gamma_i(\sigma) \geq \gamma_i(\sigma'_i,\sigma_{-i}) - \ep. \]
A \emph{sunspot $\ep$-equilibrium} is an $\ep$-equilibrium in an extended game $\Gamma^E$ that contains a correlation device,
which sends a public signal $s^t$ at the beginning of each stage $t \in \dN$.
Here, $s^t$ is uniformly distributed in $[0,1]$ and independent of $s^1,\cdots,s^{t-1}$ and of the past actions played by the players.
In particular, a \emph{(behavior) strategy} for player~$i$ in the extended game $\Gamma^E$ is a function
$\xi_i \colon \left(\cup_{t=0}^\infty \left(\times_{i \in I} ([0,1] \times A_i)\right)^t\right) \times [0,1] \to \Delta(A_i)$.
The payoff induced by a strategy profile $\xi = (\xi_i)_{i \in I}$ is
\[ \gamma^E(\xi) := \E_\xi\left[\lim_{T \to \infty} \frac{1}{T}\sum_{t=1}^T u(a^{\min\{t,t_*\}})\right], \]
where $\E_\xi$ is the expectation w.r.t.~the probability distribution $\prob_\xi$ induced by $\xi$ over the space of infinite plays $\left(\times_{i \in I} ([0,1] \times A_i)\right)^\infty$.
The strategy profile $\xi$ is a \emph{sunspot $\ep$-equilibrium} in the game $\Gamma$
if
$\gamma^E_i(\xi) \geq \gamma^E_i(\xi'_i,\xi_{-i}) - \ep$,
for every player $i \in I$ and every strategy $\xi'_i$ of player~$i$.

Solan and Solan (2018) proved that every generalized quitting game admits a sunspot $\ep$-equilibrium, for every $\ep > 0$.
Our main result in this section is that when at least two players have at least two continue actions,
the game admits an $\ep$-equilibrium, for every $\ep > 0$.

\begin{theorem}
\label{theorem:game}
Let $\Gamma = (I,(A_i^c)_{i \in I},u)$ be a general quitting game
that satisfies $|A_1^c| \geq 2$ and $|A_2^c| \geq 2$.
Then for every $\ep > 0$ the game admits an $\ep$-equilibrium.
\end{theorem}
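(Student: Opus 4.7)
The plan is to replace the public sunspot signals of a sunspot $\ep$-equilibrium of $\Gamma$ by signals produced by a jointly controlled lottery in which the two players with multiple continue actions serve as the biased coins.

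Fix $\ep > 0$. By Solan and Solan (2018), $\Gamma$ admits a sunspot $(\ep/3)$-equilibrium $\xi^*$ with payoff $v^* := \gamma^E(\xi^*)$. I first discretize each sunspot draw from $U[0,1]$ into a draw from the uniform distribution on a finite set $J$ of large size $N$, and perturb $\xi^*$ so that the mixed action profile it prescribes on every non-absorbing branch places positive probability on at least two continue actions for each of players~1 and~2. I call the resulting sunspot $(\ep/2)$-equilibrium $\xi^\dagger$.

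I then construct a profile $\sigma^*$ in $\Gamma$ organized in blocks of length $T+1$. During the first $T$ stages of each block -- the \emph{lottery phase} -- every player plays the non-absorbing continuation action prescribed by $\xi^\dagger$; in particular, players~1 and~2 play stationary mixed strategies $\mu_1 \in \Delta(A_1^c)$ and $\mu_2 \in \Delta(A_2^c)$ having support on at least two continue actions. I choose $T$ large enough that Theorem~\ref{theorem:strong}, applied to $(\mu_1,\mu_2)$, produces a mechanism $M = (\tau,J,f)$ of length at most $T$ that $(\ep/(6|J|))$-implements the uniform distribution on $J$ in a strong secure fashion. At the \emph{action stage} -- the last stage of the block -- every player plays the action prescribed by $\xi^\dagger$ conditional on the signal $f \in J$ that $M$ extracts from the lottery phase and on the simulated past history. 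If any player quits at any point, the game absorbs, exactly as in $\xi^\dagger$.

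Three classes of deviations will then have to be analyzed. A deviation at an action stage is bounded by the $(\ep/2)$-equilibrium property of $\xi^\dagger$ plus the signal-implementation error. A deviation during a lottery phase by player~1 or~2 is harmless: the strong secure property of Theorem~\ref{theorem:strong} keeps the signal distribution $(\ep/(6|J|))$-close to uniform on $J$ no matter how such a deviator plays, so the continuation payoff shifts by at most $\ep/6$. The only non-trivial deviation during a lottery phase by any other player is to quit, and by the choice of the lottery-phase profile the resulting absorbing payoff agrees up to $O(\ep)$ with the payoff of quitting at the action stage of $\xi^\dagger$, which by equilibrium is not profitable. It remains to check that $\gamma(\sigma^*)$ matches $v^*$ up to $\ep$: absorbing probabilities and absorbing payoffs are matched directly through $M$, while on non-absorbing paths the Cesàro average coincides by construction with the non-absorbing payoff of $\xi^\dagger$. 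The principal obstacle is precisely this alignment between quitting during the lottery phase and quitting at the action stage of $\xi^\dagger$; the assumption $|A_1^c|, |A_2^c| \geq 2$ is what makes the alignment possible, because it lets the non-absorbing continuation profile of $\xi^\dagger$ itself serve as the pair of biased coins required by Theorem~\ref{theorem:strong}.
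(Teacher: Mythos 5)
Your overall strategy coincides with the paper's: partition play into blocks, use Theorem~\ref{theorem:strong} with Players~1 and~2 as the biased coins to simulate the correlation device, and map any deviation in $\Gamma$ back to a deviation in the extended game $\Gamma^E$. However, there are two genuine gaps. First, your error budget does not close. You implement each block's signal with total-variation error $\ep/6$, but the coupling between the real play and the simulated sunspot play must survive \emph{every} block until termination; after $T$ blocks the divergence probability is of order $T\ep/6$. The paper avoids this by taking the per-block implementation error to be $\ep/T$ (via $C=\ep/T^2$) and by first fixing $T$ so that, under $\xi$, play terminates within $T$ simulated stages with probability at least $1-\ep$ (Eq.~\eqref{equ:prob}). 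Your argument needs both ingredients, and the second is not available for an arbitrary sunspot $\ep$-equilibrium: nothing in the definition forces termination within boundedly many stages with high probability.

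Second, and relatedly, you work with a generic (discretized) sunspot equilibrium, whereas the paper invokes the specific structure established in Solan and Solan (2018): either a stationary absorbing profile (case (A.1), where no lottery is needed), or a nonabsorbing stationary profile $x$ in which at each stage at most one designated player quits with probability $\eta^t<\ep$ and play terminates a.s.\ (case (A.2)). This structure is exactly what dissolves what you call the ``principal obstacle'': because the per-stage quitting probability is below $\ep$, the absorbing payoff of a player who quits during a lottery phase (facing the nonabsorbing profile $x_{-i}$) differs by at most $O(\ep)$ from the payoff of quitting at the corresponding stage of $\xi$; and because the correlation device only needs to name the designated quitter, the lottery target is a distribution on $J=I\cup\{0\}$, so no discretization of $U[0,1]$ is required. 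You identify the alignment problem but do not resolve it: for an arbitrary sunspot equilibrium whose prescribed profile may be heavily absorbing at some stage, the nonabsorbing restriction played throughout your lottery phase is not close to the equilibrium profile, and the claim that quitting there ``agrees up to $O(\ep)$'' with quitting at the action stage fails. Quoting and using the (A.1)/(A.2) dichotomy is the missing step.
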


To prove Theorem~\ref{theorem:game} we describe the structure of the sunspot $\ep$-equilibrium
constructed in Solan and Solan (2018).
In that paper, it was proven that for every general quitting game $\Gamma = (I,(A_i^c)_{i \in I},u)$
there exists a mixed action profile $x =(x_i)_{i \in I}\in \times_{i \in I} \Delta(A_i)$
such that (at least) one of the following two alternatives hold for every $\ep > 0$:
\begin{enumerate}
\item[(A.1)]   The mixed action profile $x$ is absorbing, and,
when supplemented with threat strategies, it defines a stationary $\ep$-equilibrium.
\item[(A.2)]   The mixed action profile $x$ is nonabsorbing,
and the game admits a sunspot $\ep$-equilibrium $\xi$ in which at every stage~$t$ the players play the mixed action profile $x$,
except of possibly one player $i^t$,
whose identity is determined by the correlation device,
who plays the mixed action $(1-\eta^t)x_{i^t} + \eta^t Q_{i^t}$,
where the random variable $\eta^t$ has values in $(0,\ep)$ and depends on the history before stage $t$ and on $i^t$.
Moreover, under $\xi$ the play terminates with probability $1$.
\end{enumerate}

Thus, if Alternative~(A.2) holds,
then the players play mainly the stationary strategy profile $x$,
and take turns in stopping the game:
in each stage~$t$ the correlation device may designate one player $i^t$ as the possible quitter,
and that player stops the game with a history-dependent probability $\eta^t$.
If the correlation device did not designate any player as the possible quitter,
then all players follow $x$.
The order in which the players are selected by the correlation device is random, it depends on the device's past choices,
and is crafted so as to keep incentive constraints.

In both cases~(A.1) and~(A.2), statistical tests are conducted to ensure that the players do not deviate from the prescribed strategy profile.
In Case~(A.2) the players verify that the distribution of continue actions played by each player~$i$ is close to $x_i$.
In Case~(A.1), if under the strategy profile $x$ exactly one player, say, player~$i_0$, quits with positive probability,
then, if the play is not terminated after sufficiently many stages,
player~$i_0$ is punished.

\bigskip

\begin{proof}[Proof of Theorem~\ref{theorem:game}]
To prove the result we need to consider case (A.2) only.
Fix then $\ep > 0$ and let $\xi$ be a sunspot $\ep$-equilibrium in the extended game $\Gamma^E$
in which the players play mainly some nonabsorbing mixed action profile $x$.
Assume first that both $x_1$ and $x_2$ are not pure.

The idea is to define a strategy profile $\sigma$ in the game $\Gamma$
by replacing the correlation device with jointly controlled lotteries conducted by Players~1 and~2.
That is, we will divided the play into blocks of random size;
block~$t$ will correspond to stage~$t$ of the implementation of $\xi$.
All stages of the block except the last one will be used to perform a jointly controlled lottery by Players~1 and~2,
which will mimic the correlation device;
that is, in this lottery Players~1 and~2 will select a player $i^t \in I$ according to a probability distribution that is close to that indicated by $\xi$
for stage $t$.
In the last stage of the block the players will play as $\xi$ plays in stage~$t$,
given the outcome of the jointly controlled lottery conducted in that block.

Formally,
for each $t \in \dN$ denote by $k^t$ the stage of the game in which block~$t$ starts,
by $\widehat a^t$ the action profile that the players play in the last stage of block~$t$ (stage $k^{t+1}-1$),
and by $i^t$ the player who is selected by Players~1 and~2 in block~$t$ using the jointly controlled lottery mechanism of Theorem~\ref{theorem:strong}
(which will be described shortly in the context of the general quitting game).
Let $T \in \dN$ be sufficiently large such that
\begin{equation}
\label{equ:prob} \prob_\xi\left(\prod_{t =1}^T (1-\eta^t) > \ep\right) < \ep:
\end{equation}
under $\xi$ with probability at least $1-\ep$, the play terminates before stage~$T$ with high probability.

Let $\sigma$ be the following strategy profile in the general quitting game $\Gamma$:
\begin{itemize}
\item[(B.1)]
In block~$t$ the players play as follows.
Consider the situation of a jointly controlled lottery performed by Players~1 and~2,
where $C = \tfrac{\ep}{T^2}$, $J=I \cup\{0\}$, where $0$ will mean that no player is designated to quit,
and the distribution $\nu$ is the probability distribution over the set $I \cup\{0\}$
determined by the strategy profile $\xi$ given the past history $(\widehat a^1,\cdots,\widehat a^{t-1},i^1,\cdots,i^{t-1})$.

Under $\sigma$ the players play the mixed action profile $x$ until the game terminates
(if some player quits) or until stage $\tau_C$ of the block (stage $k^t+\tau_C-1$ of the game),
where $\tau_C$ is the stopping time defined in Eq.~\eqref{def:tau}.
Note that the length of this phase is uniformly bounded, even if one player deviates from the play described herein.
\item[(B.2)]
If the outcome of the jointly controlled lottery is 0,
in the last stage of the block the players play the mixed action profile $x$.
\item[(B.3)]
Otherwise, denote by $\widehat i^t \in I$ the player who is selected according to the mechanism described in Theorem~\ref{theorem:strong}.
At the last stage of the block,
the players follow the strategy $\xi$ at stage $t$,
given the history $(\widehat a^1,\cdots,\widehat a^{t-1},\widehat i^1,\cdots,\widehat i^t)$.
\end{itemize}

We thus defined a strategy profile $\sigma$ in the general quitting game $\Gamma$.
By Eq.~\eqref{equ:prob}
and since the difference between the distribution of the jointly controlled lottery at each block~$t$ and $\xi(h^t)$ is at most $\tfrac{\ep}{T}$,
a standard coupling argument shows that $\|\gamma(\sigma) - \gamma^E(\xi)\|_\infty \leq 2\ep$;
that is,
the expected payoff under $\sigma$ is $2\ep$-close to the expected payoff under $\xi$.

We argue that no player can profit more than $6\ep$ by deviating to a pure strategy.
Fix then a player $i \in I$ and a pure strategy $\sigma'_i$ of that player.
Using the strategy $\sigma'_i$ we will define a strategy $\xi'_i$ in the game with correlation device
and show that $\gamma_i(\sigma'_i,\sigma_{-i}) \leq \gamma^E_i(\xi'_i,\xi_{-i}) + 3\ep$.
Since $\xi$ is a sunspot $\ep$-equilibrium,
it will follow that
\[ \gamma_i(\sigma'_i,\sigma_{-i}) \leq \gamma^E_i(\xi'_i,\xi_{-i}) + 3\ep \leq \gamma^E_i(\xi) + 4\ep \leq \gamma_i(\sigma) + 6\ep, \]
as claimed.

Our goal now is to construct a strategy $\xi'_i$ in the game with correlation device
and prove that $\gamma_i(\sigma'_i,\sigma_{-i}) \leq \gamma^E_i(\xi'_i,\xi_{-i}) + 3\ep$.
As described above,
the strategy profile $(\sigma'_i,\sigma_{-i})$ defines a partition of the stages $\dN$ into blocks.%
\footnote{In fact, the partition is only of the stages up to the termination stage.}
For each block $t$ the play defines an element $\widehat i^t \in I \cup \{0\}$ that indicates if some player has to quit with low probability,
and if so, his identity,
and an action profile $\widehat a^t \in A$, which determines the action profile played by any player who does not quit.
Let $\rho_t$ be the conditional probability that under $(\sigma'_i,\sigma_{-i})$ player~$i$ quits during the first $\tau_C-1$ stages of block $t$,
given $\widehat i^1,\cdots,\widehat i^{t-1},\widehat a^1,\cdots,\widehat a^{t-1}$.
For every action $a_i \in A_i$, let $\mu_t(a_i)$ be the conditional probability that under $(\sigma'_i,\sigma_{-i})$ we have $\widehat a^t_i=a_i$,
given $\widehat i^1,\cdots,\widehat i^{t-1},\widehat i^t,\widehat a^1,\cdots,\widehat a^{t-1}$.
Let $\xi'_i$ be the strategy of player~$i$, that plays as follows at stage~$t$:
\begin{itemize}
\item   The quitting action $Q_i$ is played with probability $\rho_t$.
\item   For each $a_i \in A_i$, the action $a_i$ is played with probability $(1-\rho_t)\mu_t(a_i)$.
\end{itemize}
Since under $\xi_{-i}$ and $\sigma_{-i}$ the designated player quits with probability at most $\ep$,
it follows that
$\|\gamma(\sigma'_i,\sigma_{-i}) - \gamma^E(\xi'_i,\xi_{-i})\| \leq 3\ep$, as claimed.

\bigskip

It is left to take care of the situation that one (or both) of the mixed actions $x_1$ or $x_2$ is pure.
If the mixed action $x_1$ is pure, then, since $|A_1^c| \geq 2$,
we can find a mixed action $x'_1 \in \Delta(A_1^c)$ that is not pure and $\ep$-close to $x_1$ in the $l_\infty$-norm.
A similar statement holds for $x_2$.
In Step~(B.1) we then change $x_i$ by $x'_i$ for each player $i \in \{1,2\}$ whose mixed action $x_i$ is pure.
The only effect that this change has is that if a player quits, then his payoff changes by at most $2\ep$.
Consequently the strategy profile described above is a $10\ep$-equilibrium.
\end{proof}

\end{document}